\newtheorem{thm}{Theorem}
\newtheorem{lemma}{Lemma}
\newtheorem{cor}{Corollary}
\DeclareMathOperator{\co}{co}
\DeclareMathOperator{\clco}{\overline{co}}
\title{Time-Optimal Control\\of Linear Fractional Systems}
\author{Ivan Matychyn}
\date{}
\begin{document}
\maketitle

\begin{abstract}
Problem of time-optimal control of linear systems with fractional dynamics is treated in the paper from the convex-analytic standpoint. A linear system of fractional differential equations involving Riemann--Liouville derivatives is considered. A method to construct a control function that brings trajectory of the system to the terminal state in the shortest time is proposed in terms of attainability sets and their support functions.
\end{abstract}

\section{Introduction}

Optimal control of systems with fractional dynamics is a hard problem due to specific of fractional differentiation operators, e.g. lack of the semigroup property. The papers on this topic include \cite{agrawal}, where necessary optimality conditions of Euler--Lagrange were derived, and \cite{TricChen}, where the problem of time-optimal control is addressed.

Here the fractional time-optimal control problem \cite{TricChen} for a linear system with fractional dynamics is treated using technique of attainability sets and their support functions. This approach has its roots in some methods of the differential games theory \cite{Chik,PshOst}.

\section{Preliminary Results}

Denote by $ \mathbb{R}^n $ the $ n$-dimensional Euclidean space and by $\mathbb{R}_+=[0,\infty)$ the positive semi-axis. In what follows we will also denote by $ x\cdot y $ the scalar (dot) product and by $\|x\|$ the Euclidean norm for any $ x,y\in\mathbb{R}^n $.

Suppose $ f:\mathbb{R}_+\to\mathbb{R}^n$ is an absolutely continuous function. Let us recall that the Riemann--Liouville (left-sided) fractional integral and derivative of order $\alpha$, $0<\alpha<1$, are defined as follows:
\begin{align*}
J_{a+}^\alpha f(t) &= \frac{1}{\Gamma(\alpha)}\int_{a}^{t} (t-\tau)^{\alpha-1}f(\tau)d\tau,\ t> a,  \\
D_{a+}^\alpha f(t) &= \frac{d}{dt} J_{a+}^{1-\alpha} f(t),\ t>a.
\end{align*}
In what follows we will omit the lower limit of integration in the notation if it is equal to zero, i.e. $ J^\alpha f(t) \triangleq J_{0+}^\alpha f(t) $, $ D^\alpha f(t) \triangleq D_{0+}^\alpha f(t)$.

Along with the left-sided fractional integrals and derivatives, one can consider their right-sided counterparts:
\begin{align*}
J_{b-}^\alpha f(t) &= \frac{1}{\Gamma(\alpha)}\int_{t}^{b} (\tau-t)^{\alpha-1}f(\tau)d\tau,\ t<b, \\
D_{b-}^\alpha f(t) &= -\frac{d}{dt} J_{b-}^{1-\alpha} f(t),\ t<b.
\end{align*}

In \cite{ChikEid} the Mittag-Leffler generalized matrix function was introduced:
\begin{equation} \label{ML}
E_{\rho,\mu}(B)=\sum\limits_{k=0}^\infty\frac{B^k}{\Gamma(k\rho+\mu)},
\end{equation}
where $\rho > 0$, $\mu\in\mathbb{C}$, and $B$ is an arbitrary square matrix of order $n$. It should be noted that $E_{\rho,\mu}(B)$ generalizes the matrix exponential as
\begin{equation} \label{expo}
E_{1,1}(B)=e^B=\sum\limits_{k=0}^\infty\frac{B^k}{k!}.
\end{equation}  
The matrix $\alpha$-Exponential function, introduced in \cite{KilSriTru}, is closely related to the Mittag-Leffler generalized matrix function:
\begin{equation*}
e_\alpha^{At}=t^{\alpha-1}\sum\limits_{k=0}^{\infty} \frac{A^k t^{\alpha k}}{\Gamma[(k+1)\alpha]}=t^{\alpha-1} E_{\alpha,\alpha}(At^\alpha).
\end{equation*}
The both functions play important role in the theory of fractional differential equations (FDEs). In particular, consider a system of linear FDEs with constant coefficients
\begin{equation} \label{fde}
D^\alpha z = Az + u,\quad 0<\alpha<1,
\end{equation}
where $z\in\mathbb{R}^n$, $ A $ is a square matrix, and $ u:\mathbb{R}_+\to \mathbb{R}^n$ is a measurable and bounded function, under the initial condition
\begin{equation} \label{fdeini}
J^{1-\alpha}z\bigr |_{t=0}=z^0.
\end{equation}
Then the solution to the Cauchy-type problem \eqref{fde}, \eqref{fdeini} can be written down as follows \cite{ChikMat1}
\begin{equation} \label{fdesoln1}
z(t)=t^{\alpha-1} E_{\alpha,\alpha}(At^\alpha)z^0 + \int_{0}^{t}(t-\tau)^{\alpha-1}E_{\alpha,\alpha} (A(t-\tau)^\alpha)u(\tau)d\tau
\end{equation}
or, in terms of matrix $ \alpha $-exponential function, as \cite{KilSriTru}
\begin{equation} \label{fdesoln2}
z(t) = e_\alpha^{At} z^0 + \int_{0}^{t} e_\alpha^{A(t-\tau)}u(\tau)d\tau.
\end{equation}

Now we proceed with a homogeneous linear system involving right-sided fractional derivative in the sense of Riemann--Liouville
\begin{equation} \label{sopr}
D_{b-}^\alpha z(t)=Az(t),\quad z\in\mathbb{R}^n,\ t<b,\ 0<\alpha<1,
\end{equation}
under the boundary condition
\begin{equation} \label{bnd}
J_{b-}^{1-\alpha} z \bigr |_{t=b}=\hat{z}.
\end{equation}

The following lemma holds true.

\begin{lemma} \label{adjlm}
Equation \eqref{sopr} under the condition \eqref{bnd} has a solution given by the following formula
\begin{equation} \label{soprsln}
z(t)=\hat{z}e_\alpha^{A(b-t)}.
\end{equation}
\end{lemma}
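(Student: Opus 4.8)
The plan is to verify directly that the proposed formula \eqref{soprsln} satisfies both the equation \eqref{sopr} and the boundary condition \eqref{bnd}, rather than deriving it from scratch. The key structural observation is that the right-sided problem on the interval $t<b$ should reduce to the already-solved left-sided problem by the time-reversal substitution $s=b-t$. First I would define $w(s)=z(b-s)$ for $s>0$ and show that the right-sided Riemann--Liouville derivative $D_{b-}^\alpha$ applied to $z$ at $t$ transforms into the left-sided derivative $D^\alpha$ (i.e. $D_{0+}^\alpha$) applied to $w$ at $s=b-t$, up to the sign introduced by the chain rule. Under this change of variables the right-sided integral $J_{b-}^\alpha z(t)=\frac{1}{\Gamma(\alpha)}\int_t^b(\tau-t)^{\alpha-1}z(\tau)\,d\tau$ becomes $\frac{1}{\Gamma(\alpha)}\int_0^{b-t}\sigma^{\alpha-1}w(b-t-\sigma)\,d\sigma = J^\alpha w(b-t)$, and the extra minus sign in the definition of $D_{b-}^\alpha$ is exactly compensated by the $\frac{ds}{dt}=-1$ factor coming from differentiating in $t$ rather than $s$.

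Having established that $D_{b-}^\alpha z(t)=Az(t)$ is equivalent to $D^\alpha w(s)=A^{\!*}w(s)$-type equation for $w$ at $s=b-t$ (with the matrix acting consistently, noting the formula writes $\hat z$ on the left so the product $\hat z e_\alpha^{A(b-t)}$ is a row-vector/matrix convention), the boundary condition \eqref{bnd} at $t=b$ translates into an initial condition at $s=0$, namely $J^{1-\alpha}w\bigr|_{s=0}=\hat z$. This is precisely the homogeneous analogue of the Cauchy-type problem \eqref{fde}, \eqref{fdeini} with $u\equiv 0$, whose solution is given by the first term of \eqref{fdesoln2}, that is $w(s)=e_\alpha^{As}\hat z$. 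Substituting back $s=b-t$ yields $z(t)=e_\alpha^{A(b-t)}\hat z$, matching \eqref{soprsln} under the paper's ordering convention.

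The main obstacle I anticipate is bookkeeping the two sign conventions simultaneously: the explicit minus sign in the definition $D_{b-}^\alpha f=-\frac{d}{dt}J_{b-}^{1-\alpha}f$ and the minus sign from the Jacobian of $t\mapsto b-t$. I would track these carefully to confirm they cancel, leaving a genuine left-sided Cauchy problem rather than one with a flipped sign on $A$; a sign error here would produce $e_\alpha^{-A(b-t)}$ instead of the claimed expression. A secondary technical point is justifying differentiation under the integral sign and the absolute-continuity hypotheses needed so that $J_{b-}^{1-\alpha}z$ is differentiable and the derivative $D_{b-}^\alpha$ is well defined; these are routine given the smoothness of $e_\alpha^{A(b-t)}$ away from the endpoint $t=b$, where the factor $(b-t)^{\alpha-1}$ carries the expected integrable singularity. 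Finally I would directly confirm the boundary condition by computing $\lim_{t\to b^-}J_{b-}^{1-\alpha}z(t)$ and checking it equals $\hat z$, using $E_{\alpha,\alpha}(0)=1/\Gamma(\alpha)$ so that the $\Gamma$ factors telescope correctly.
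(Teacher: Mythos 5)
Your proof is correct, but it takes a genuinely different route from the paper. The paper proves the lemma by direct verification: since $e_\alpha^{A(b-t)}$ is entire in $(b-t)^\alpha$ (up to the factor $(b-t)^{\alpha-1}$), the series can be differentiated term-by-term, and the power-function formulas
\begin{equation*}
D_{b-}^\alpha (b-t)^{\beta-1}=\frac{\Gamma(\beta)}{\Gamma(\beta-\alpha)}(b-t)^{\beta-\alpha-1},\qquad
J_{b-}^\alpha (b-t)^{\beta-1}=\frac{\Gamma(\beta)}{\Gamma(\beta+\alpha)}(b-t)^{\beta+\alpha-1}
\end{equation*}
immediately yield both the equation \eqref{sopr} and, via $J_{b-}^{1-\alpha}\hat z e_\alpha^{A(b-t)}=\hat z E_{\alpha,1}(A(b-t)^\alpha)$, the boundary condition \eqref{bnd}. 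You instead reduce the right-sided problem to the left-sided one by the time reversal $s=b-t$, checking that the explicit minus in the definition of $D_{b-}^\alpha$ cancels the Jacobian of $t\mapsto b-t$, so that \eqref{sopr}--\eqref{bnd} becomes exactly the Cauchy-type problem \eqref{fde}, \eqref{fdeini} with $u\equiv 0$ and $z^0=\hat z$, whose solution is the first term of \eqref{fdesoln2}. Your sign bookkeeping is right, and the reduction is clean: it buys reuse of the already-stated solution formula and avoids series manipulation, at the cost of relying on \eqref{fdesoln2} and on the change-of-variables identity $J_{b-}^\alpha z(t)=(J^\alpha w)(b-t)$, which you verify. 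The paper's argument is more computational but self-contained given \eqref{diff}--\eqref{integ}.

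One point to tighten: the time reversal does \emph{not} introduce an adjoint, so the reduced equation is $D^\alpha w = Aw$ (in the column-vector reading), not an ``$A^\ast$-type'' equation as your second paragraph suggests. The conjugate transpose enters only if you insist on the paper's row-vector writing $z(t)=\hat z e_\alpha^{A(b-t)}$ and then transpose back to columns; since the paper itself uses the lemma later in the column form $\psi(\tau)=e_\alpha^{A^\ast(T-\tau)}\hat\psi$ (applied to the matrix $A^\ast$, not produced by the reduction), it is cleaner to state the reduction purely in one convention and note the ordering in \eqref{soprsln} as a notational artifact.
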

\begin{proof}
Since $ e_\alpha^{A(b-t)} $ is an entire function, the corresponding power series can be integrated and differentiated term-by-term. In view of the formulas \cite{SamKilMar}
\begin{align}
D_{b-}^\alpha (b-t)^{\beta - 1} &= \frac{\Gamma(\beta)}{\Gamma(\beta - \alpha)}(b-t)^{\beta-\alpha-1},\label{diff}\\
J_{b-}^\alpha (b-t)^{\beta - 1} &= \frac{\Gamma(\beta)}{\Gamma(\beta + \alpha)}(b-t)^{\beta+\alpha-1},\label{integ}
\end{align}
one can easily verify that \eqref{soprsln} satisfies \eqref{sopr} by direct substitution. Moreover, \eqref{integ} implies that $ J_{b-}^{1-\alpha} \hat{z}e_\alpha^{A(b-t)} = \hat{z}E_{\alpha,1}(A(b-t)^\alpha)$ and condition \eqref{bnd} is also fulfilled.
\end{proof}

The following properties of the matrix $ \alpha $-exponential function are direct consequences of  properties  of the conjugate transpose:
\begin{equation} \label{pr1}
(e_\alpha^{At})^\ast=e_\alpha^{A^\ast t}
\end{equation}
\begin{equation} \label{pr2}
\psi \cdot e_\alpha^{At}u = e_\alpha^{A^\ast t}\psi \cdot u
\end{equation}

Below we present some properties \cite{ArkLev} of set-valued maps  used in the sequel.

Let us suppose that $ U $ is a nonempty compact (closed and bounded) set in $ \mathbb{R}^n $. Hereafter we will denote by $U[0,t]$ the set of all measurable functions defined on $ [0,t] $ and taking their values in $ U $.
\begin{equation} \label{pr3}
\sup\limits_{u(\cdot)\in U[0,t]}\int_0^t f(\tau,u(\tau)) d\tau= \int_{0}^{t}\max\limits_{u\in U}f(\tau,u)d\tau
\end{equation}

Denote by $ \co M $ and $ \clco M$  the convex hull and the closure of the convex hull of a set $ M \subset\mathbb{R}^n$, respectively.

For any continuous function $ F:[0,t] \times U \to \mathbb{R}^n $, the set-valued map $ F(\tau,U) $ possess the following property:
\begin{equation} \label{pr4}
\int_{0}^{t} F(\tau,U)d\tau=\left\{\int_{0}^{t} F(\tau,U)d\tau:\ u(\cdot)\in U[0,t]   \right\}=\clco \int_{0}^{t} F(\tau,U)d\tau.
\end{equation}
The integral $ \int_{0}^{t} F(\tau,U)d\tau$ is to be thought of in the sense of Aumann, i.e. as the set of integrals of all measurable selections of the set valued map $F(\tau,U)$.

Here we recall definition of the support function and present a useful result of convex analysis.

Let $ M\in\mathbb{R}^n $ be a convex closed set, i.e. $ M=\clco M$. Then the function 
\begin{equation*}
\sigma_M(\psi)=\sup\limits_{m\in M} \psi\cdot m, \quad \psi\in\mathbb{R}^n
\end{equation*}
is called the support function of $ M $.

\begin{lemma}[\cite{PshOst}]
Let $ X $ and $ M $ be convex closed sets. Moreover, assume that $ X $ is bounded. Then $ X\cap M=\emptyset $ if and only if there exist a vector $ \psi \in \mathbb{R}^n $ and a number $ \varepsilon>0 $ such that
\begin{equation}
\sigma_X(\psi) + \sigma_M(-\psi)\le -\varepsilon.
\end{equation}
\end{lemma}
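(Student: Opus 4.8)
The plan is to read the inequality $\sigma_X(\psi)+\sigma_M(-\psi)\le-\varepsilon$ as the analytic expression of a \emph{strict} separation of $X$ and $M$ with a positive gap. Using $\sigma_X(\psi)=\sup_{x\in X}\psi\cdot x$ and $\sigma_M(-\psi)=\sup_{m\in M}(-\psi)\cdot m=-\inf_{m\in M}\psi\cdot m$, the condition rewrites as
\begin{equation*}
\sup_{x\in X}\psi\cdot x\ \le\ \inf_{m\in M}\psi\cdot m-\varepsilon ,
\end{equation*}
i.e. $\psi\cdot x+\varepsilon\le\psi\cdot m$ for every $x\in X$, $m\in M$. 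The reverse implication ($\Leftarrow$) is then immediate: if such $\psi,\varepsilon$ exist, then no point can lie in both sets, since $x=m$ would force $\psi\cdot x+\varepsilon\le\psi\cdot x$, a contradiction; hence $X\cap M=\emptyset$.

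For the forward implication ($\Rightarrow$) I would reduce the separation of two sets to the separation of a point from one set by passing to the difference set $K:=M-X=\{m-x:\ m\in M,\ x\in X\}$. The emptiness of $X\cap M$ is exactly the statement $0\notin K$, and the separating inequality above for $X$ and $M$ is equivalent to separating the origin from $K$. So it suffices to produce $\psi$ and $\varepsilon>0$ with $\psi\cdot k\ge\varepsilon$ for all $k\in K$.

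The key structural facts to record about $K$ are that it is convex (difference of convex sets) and, crucially, \emph{closed}: if $k_n=m_n-x_n\to k$, compactness of $X$ lets me extract $x_{n_j}\to x\in X$, whence $m_{n_j}=k_{n_j}+x_{n_j}\to k+x\in M$ (here closedness of $M$ is used), giving $k=(k+x)-x\in K$. With $K$ closed, convex and not containing $0$, I would invoke the strict separation of a point from a closed convex set, realized concretely through the metric projection: let $k_0$ be the nearest point of $K$ to the origin and set $\psi:=k_0$. The variational inequality for the projection, $(k-k_0)\cdot(0-k_0)\le0$, yields $\psi\cdot k\ge\|k_0\|^2$ for all $k\in K$. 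Setting $\varepsilon:=\|k_0\|^2=\bigl(\mathrm{dist}(0,K)\bigr)^2>0$ and unwinding $k=m-x$ gives $\psi\cdot m-\psi\cdot x\ge\varepsilon$ for all $m,x$; taking $\inf_m$ and $\sup_x$ produces $\inf_{m}\psi\cdot m-\sup_x\psi\cdot x\ge\varepsilon$, which is precisely $\sigma_X(\psi)+\sigma_M(-\psi)\le-\varepsilon$.

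The main obstacle is guaranteeing the \emph{positivity of the gap} $\varepsilon$, and this is exactly where the compactness hypothesis on $X$ is indispensable. Two disjoint closed convex sets can have zero distance (asymptotically tangent), in which case $K$ fails to be closed, $0$ lies in its closure, and no $\varepsilon>0$ can exist; compactness of $X$ both forces $K$ to be closed and keeps $0$ at positive distance from it, so the proof hinges on using boundedness of $X$ at precisely this step rather than on the separation theorem itself, which is otherwise standard.
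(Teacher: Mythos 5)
Your proof is correct, but it takes a genuinely different route from the paper's. The paper's proof is a two-step affair: it invokes the hyperplane separation theorem as a black box --- in the strong form valid for a disjoint pair consisting of a compact convex set and a closed convex set, which directly yields a vector $\psi$ and a gap $\varepsilon>0$ with $\psi\cdot x\le\psi\cdot m-\varepsilon$ for all $x\in X$, $m\in M$ --- and then performs the same sup/inf manipulation you carry out at the end to convert this into $\sigma_X(\psi)+\sigma_M(-\psi)\le-\varepsilon$. You instead prove that strong separation statement from first principles: you pass to the difference set $K=M-X$, show it is convex and closed (this is exactly where compactness of $X$ enters, as you correctly emphasize), note that $0\notin K$ restates disjointness, and produce the separating vector as the metric projection $k_0$ of the origin onto $K$, with $\varepsilon=\|k_0\|^2>0$ coming from the projection's variational inequality. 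What your approach buys: it is self-contained; it pinpoints precisely why boundedness of $X$ is indispensable (without it $K$ need not be closed, the origin can lie in the closure of $K$, and no positive gap exists --- the asymptotically tangent situation you describe); and it explicitly proves the easy converse direction, which the paper's proof omits entirely even though the statement is an ``if and only if.'' What the paper's approach buys: brevity, since the strong separation theorem for a compact convex set disjoint from a closed convex set is standard, and citing it spares one from reproving the projection argument that underlies it.
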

\begin{proof}
Since the sets $ X $ and $ M $ are disjoint, by assumptions of the lemma and in view of hyperplane separation theorem, there exist a vector $ \psi $ and a number $ \varepsilon>0$ such that 
\begin{equation*}
\psi\cdot x \le \psi\cdot m-\varepsilon\quad \forall x\in X,\ m\in M.
\end{equation*}
Taking supremum in $ x $  on the left-hand side and infimum in $ m $ on the right-hand side, we obtain an equivalent inequality
\begin{equation*}
\sigma_X(\psi)\le \inf_{m\in M} \psi\cdot m - \varepsilon = -\sup _{m\in M} (-\psi\cdot m) - \varepsilon = -\sigma_M(-\psi) - \varepsilon,
\end{equation*}
which completes the proof.
\end{proof}

\begin{cor} \label{cr1}
Let $X=\clco X$, $ M=\clco M$ and $ X $ be bounded. Then $ X\cap M\ne\emptyset $ if and only if 
\begin{equation}
\lambda_{X,M}=\min\limits_{\|\psi\|=1}[\sigma_X(\psi)+\sigma_M(-\psi)]\ge 0.
\end{equation}
\end{cor}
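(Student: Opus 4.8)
The plan is to derive this corollary as the logical contrapositive of the preceding lemma, after a normalization that restricts the separating vector to the unit sphere. First I would invoke the lemma in its negated form: since $X$ and $M$ are convex and closed and $X$ is bounded, the lemma asserts that $X\cap M=\emptyset$ holds if and only if there exist $\psi\in\mathbb{R}^n$ and $\varepsilon>0$ with $\sigma_X(\psi)+\sigma_M(-\psi)\le-\varepsilon$. Negating both sides, $X\cap M\ne\emptyset$ holds if and only if \emph{no} such pair $(\psi,\varepsilon)$ exists. Observing that the existence of some $\varepsilon>0$ with $\sigma_X(\psi)+\sigma_M(-\psi)\le-\varepsilon$ is equivalent to the strict inequality $\sigma_X(\psi)+\sigma_M(-\psi)<0$, the negated condition reads $\sigma_X(\psi)+\sigma_M(-\psi)\ge 0$ for every $\psi\in\mathbb{R}^n$.

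Next I would reduce this universal statement over all of $\mathbb{R}^n$ to a statement over the unit sphere $\{\|\psi\|=1\}$. The key fact is that support functions are positively homogeneous of degree one, i.e. $\sigma_X(t\psi)=t\,\sigma_X(\psi)$ and $\sigma_M(-t\psi)=t\,\sigma_M(-\psi)$ for $t>0$; hence the sign of $g(\psi):=\sigma_X(\psi)+\sigma_M(-\psi)$ is unchanged under positive scaling of $\psi$, while $g(0)=0$. Consequently $g(\psi)\ge 0$ for all $\psi\in\mathbb{R}^n$ if and only if $g(\psi)\ge 0$ for all $\psi$ with $\|\psi\|=1$.

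It remains to justify writing $\min$ rather than $\inf$ in the definition of $\lambda_{X,M}$. Because $X$ is bounded, $\sigma_X$ is finite everywhere and, being a finite convex function on $\mathbb{R}^n$, is continuous; the map $\psi\mapsto\sigma_M(-\psi)$ is lower semicontinuous as a pointwise supremum of linear functions. Thus $g$ is lower semicontinuous, and since the unit sphere is compact, $g$ attains its infimum there. Putting these observations together, $X\cap M\ne\emptyset$ is equivalent to $\lambda_{X,M}=\min_{\|\psi\|=1}g(\psi)\ge 0$, as claimed.

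I expect the logical negation of the lemma to be routine; the step requiring the most care is the combination of positive homogeneity (to pass to the unit sphere) with the semicontinuity-plus-compactness argument that legitimizes the minimum — in particular keeping track of the fact that $\sigma_M(-\psi)$ may equal $+\infty$ when $M$ is unbounded, which does not obstruct lower semicontinuity or the attainment of the minimum.
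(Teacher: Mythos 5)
Your proof is correct and follows exactly the route the paper intends: the corollary is stated there without proof as an immediate consequence of the preceding separation lemma, and your argument (negating the lemma, using positive homogeneity of support functions to pass to the unit sphere, and invoking lower semicontinuity plus compactness to replace $\inf$ by $\min$) supplies precisely the omitted details. In particular, your care about $\sigma_M(-\psi)$ possibly taking the value $+\infty$ for unbounded $M$ is a point the paper glosses over entirely.
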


\section{Optimal Control Problem}
Consider a system of linear fractional differential equations (FDEs) with constant coefficients \eqref{fde} under the initial condition \eqref{fdeini}.

Let us fix a point $ m\in \mathbb{R}^n $. Here we formulate the optimal control problem: find a control function $ u(\cdot) $, $ u:\mathbb{R}_+\to U $, from a class of measurable functions taking their values in a nonempty compact set $ U $, $ U\subset \mathbb{R}^n $, such that the corresponding trajectory of \eqref{fde}, \eqref{fdeini} arrives at $ m $ in the shortest time $ T $. 

If we fix some admissible control function $ u(\cdot)\in U[0,t]$, then the solution to the Cauchy-type problem \eqref{fde}, \eqref{fdeini} is given by  \eqref{fdesoln2}.

Consider the attainability set 
\begin{equation} \label{fdeattn}
\begin{aligned}
Z(t,z^0) &= \left\{e_\alpha^{At} z^0 + \int_{0}^{t} e_\alpha^{A(t-\tau)}u(\tau)d\tau:\ u(\cdot)\in U[0,t]  \right\}\\
&= e_\alpha^{At} z^0 + \int_{0}^{t} e_\alpha^{A(t-\tau)} U d\tau.
\end{aligned}
\end{equation}
According to properties of integrals of set-valued maps, in view of \eqref{pr4}, the attainability set $ Z(t,z^0) $ is closed and convex, while the boundedness of $ U $ implies $ Z(t,z^0) $ is also bounded.

Consider support function of the attainability set \eqref{fdeattn}.
\begin{equation} \label{fdesup}
\begin{gathered}
\sigma_{Z(t,z^0)}(\psi)=\sup\limits_{z\in Z(t,z^0)}(z\cdot \psi)\\
=\sup\limits_{u(\cdot)\in U[0,t]}\left\{ \psi \cdot e_{\alpha}^{At}z^0 + \int_{0}^{t} \psi\cdot e_{\alpha}^{A(t-\tau)} u(\tau) d\tau\right\}\\
= \psi \cdot e_{\alpha}^{At}z^0 + \int_{0}^{t} \sigma_U(e_{\alpha}^{A^\ast(t-\tau)}\psi)  d\tau.
\end{gathered}
\end{equation}
Here we applied properties \eqref{pr1}--\eqref{pr3}.

Let us introduce the function 
\begin{equation} \label{lambda}
\lambda(t,z^0)=\min\limits_{\|\psi\|=1}[\sigma_{Z(t,z^0)}(\psi)-m\cdot \psi]
\end{equation}
and denote
\begin{equation} \label{time}
T(z^0)=\min \{ t\ge 0: \ \lambda(t,z^0)\ge 0\}.
\end{equation}

Then the following theorem holds true.
\begin{thm}
Trajectory of the system \eqref{fde}, \eqref{fdeini} can be brought to the point $ m $ at the minimal time $ T=T(z^0) $, given by the formula \eqref{time}, with the help of control function of the form
\begin{equation*}
\hat{u}(\tau)=\arg\max\limits_{u\in U} u\cdot \psi(\tau),
\end{equation*}
where $ \psi(\tau)$ is a solution to the adjoint (co-state) system
\begin{align} 
D_{T-}^\alpha\psi &= A^\ast \psi, \label{adj1}\\
J_{T-}^{1-\alpha}\psi \bigr |_{t=T} &= \hat{\psi} \label{adj2}
\end{align}
and $ \hat{\psi}=\arg\min\limits_{\|\psi\|=1} [\sigma_{Z(T,z^0)}(\psi)-\psi\cdot m]$.
\end{thm}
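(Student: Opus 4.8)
The plan is to reduce reachability of the target $m$ to a support-function inequality by means of Corollary~\ref{cr1}, thereby identifying $T(z^0)$ as the minimal arrival time, and then to read the optimal control off the direction that attains the support function of the attainability set.

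First I would show that $T(z^0)$ is the least time at which $m$ can be reached. The trajectory arrives at $m$ at time $t$ precisely when $m\in Z(t,z^0)$, and $Z(t,z^0)$ is closed, bounded and convex. Applying Corollary~\ref{cr1} with $X=Z(t,z^0)$ and $M=\{m\}$, and noting that $\sigma_{\{m\}}(-\psi)=-m\cdot\psi$, we obtain
\[
m\in Z(t,z^0)\iff \min_{\|\psi\|=1}\bigl[\sigma_{Z(t,z^0)}(\psi)-m\cdot\psi\bigr]=\lambda(t,z^0)\ge 0 .
\]
Hence $T(z^0)$ defined by \eqref{time} is exactly the least $t$ for which $m\in Z(t,z^0)$, which is the optimality assertion for $T$.

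Next I would extract the supporting direction and the control. Assuming $\lambda(\cdot,z^0)$ is continuous in $t$, minimality of $T$ forces $\lambda(t,z^0)<0$ for $t<T$ while $\lambda(T,z^0)\ge 0$, so $\lambda(T,z^0)=0$; thus the minimizer $\hat\psi$ satisfies $\sigma_{Z(T,z^0)}(\hat\psi)=m\cdot\hat\psi$, i.e.\ $\hat\psi$ supports $Z(T,z^0)$ at $m$. Using the explicit form \eqref{fdesup} of the support function together with \eqref{pr3}, the supremum defining $\sigma_{Z(T,z^0)}(\hat\psi)$ is attained by maximizing the integrand pointwise, that is, by $\hat u(\tau)=\arg\max_{u\in U}u\cdot e_\alpha^{A^\ast(T-\tau)}\hat\psi$. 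By Lemma~\ref{adjlm} applied to \eqref{adj1}--\eqref{adj2} (with $A$ replaced by $A^\ast$ and $b=T$), the co-state is $\psi(\tau)=e_\alpha^{A^\ast(T-\tau)}\hat\psi$, so that $\hat u(\tau)=\arg\max_{u\in U}u\cdot\psi(\tau)$, which is the claimed form.

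Finally I would verify that the trajectory driven by $\hat u$ terminates at $m$. Its endpoint $\hat z=e_\alpha^{AT}z^0+\int_0^T e_\alpha^{A(T-\tau)}\hat u(\tau)\,d\tau$ lies in $Z(T,z^0)$ and, by the choice of $\hat u$, satisfies $\hat z\cdot\hat\psi=\sigma_{Z(T,z^0)}(\hat\psi)=m\cdot\hat\psi$; hence both $\hat z$ and $m$ lie on the supporting hyperplane with normal $\hat\psi$. I expect this to be the main obstacle: the equality $\hat z\cdot\hat\psi=m\cdot\hat\psi$ only places $\hat z$ on the exposed face of $Z(T,z^0)$ in the direction $\hat\psi$, and to conclude $\hat z=m$ one must show that this face reduces to the single point $m$. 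This is precisely what minimality of $T$ should supply—the sets $Z(t,z^0)$ first meet $m$ at $t=T$, so that $m$ is an exposed point in the direction $\hat\psi$—and I would make it rigorous by exploiting that $m\notin Z(t,z^0)$, equivalently $\lambda(t,z^0)<0$, for every $t<T$.
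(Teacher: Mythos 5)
Your use of the machinery is the same as the paper's: Corollary~\ref{cr1} with $X=Z(t,z^0)$, $M=\{m\}$ to identify $T(z^0)$ as the first hitting time, the supporting direction $\hat\psi$ with $\hat\psi\cdot m=\sigma_{Z(T,z^0)}(\hat\psi)$, pointwise maximization in \eqref{fdesup} via \eqref{pr3}, and Lemma~\ref{adjlm} to express the co-state as $\psi(\tau)=e_\alpha^{A^\ast(T-\tau)}\hat\psi$. The genuine difference --- and the genuine gap --- is the direction in which you run the final step. You \emph{define} $\hat u$ by the argmax and then try to prove its endpoint equals $m$; as you yourself observe, the argmax condition only places the endpoint on the face of $Z(T,z^0)$ exposed by $\hat\psi$. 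The paper's proof goes the other way: since $Z(T,z^0)$ is closed, $m\in Z(T,z^0)$, so there \emph{exists} an admissible control whose endpoint is exactly $m$; because $\hat\psi\cdot m=\sigma_{Z(T,z^0)}(\hat\psi)$, that control attains the supremum in \eqref{fdesup}; and since $\int_0^T\psi(\tau)\cdot u(\tau)\,d\tau\le\int_0^T\max_{u\in U}[\psi(\tau)\cdot u]\,d\tau$ with equality forcing $u(\tau)\in\arg\max_{u\in U}u\cdot\psi(\tau)$ for a.e.\ $\tau$, the control reaching $m$ must have the stated argmax form. In other words, the argmax representation is derived as a \emph{necessary} condition on the control that reaches $m$, not used as a recipe whose output must then be shown to reach $m$. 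Your direction requires strictly more than the paper's and cannot be completed without extra hypotheses.

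Moreover, the repair you sketch --- that minimality of $T$ forces $m$ to be an exposed point of $Z(T,z^0)$ in the direction $\hat\psi$ --- is false in general. Take $n=2$, $A=0$, $U=[-1,1]^2$, $z^0=(0,-c)$ with $c>0$, and $m=(0,0)$. Then $Z(t,z^0)=\frac{t^{\alpha-1}}{\Gamma(\alpha)}z^0+\frac{t^{\alpha}}{\Gamma(\alpha+1)}U$ is a square, and at the first time $T=\alpha c$ for which $m\in Z(T,z^0)$ the point $m$ lies in the \emph{relative interior} of the top edge: the face exposed by $\hat\psi=(0,1)$ is a whole segment, and most argmax selections (for instance the constant control $\hat u\equiv(1,1)$, which is a legitimate pointwise maximizer) terminate at $\bigl(\frac{T^{\alpha}}{\Gamma(\alpha+1)},0\bigr)\ne m$. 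So ``first contact'' does not imply exposedness, and your final step fails as stated. The cure is either to reverse the direction of the argument as the paper does (existence of a control reaching $m$ comes for free from $m\in Z(T,z^0)$, and the argmax form is then a consequence), or to impose an additional assumption such as strict convexity of $U$, under which $\arg\max_{u\in U}u\cdot\psi(\tau)$ is a.e.\ single-valued, the exposed face is a singleton, and your construction does return the optimal control.
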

\begin{proof}
Let $ T=\min\{t\ge 0:\ m\in Z(t,z^0)\} $. Here minimum is attained due to the closedness of $ Z(t,z^0) $. 

Moreover, $ m $ is a boundary point of $ Z(T,z^0) $, i.e. $m\in\partial Z(T,z^0) $. As a boundary point, $m$ is contained in at least a supporting hyperplane $ H(\hat{\psi})=\{x\in\mathbb{R}^n:\ \hat{\psi}\cdot x = \sigma_{Z(T,z^0)}(\hat{\psi})\} $. Hence, for some $ \hat{\psi} $
\begin{equation} \label{eq1}
\hat{\psi}\cdot m = \sigma_{Z(T,z^0)}(\hat{\psi}).
\end{equation}
Thus, the control function $\hat{u}(\cdot)$ that ensures bringing trajectory of \eqref{fde}, \eqref{fdeini} to the point $ m $ is the function at which the maximum in \eqref{fdesup} is attained. Therefore it must satisfy 
\begin{equation*}
\hat{u}(\tau)=\arg\max\limits_{u\in U} u\cdot e_{\alpha}^{A^\ast(T-\tau)}\hat{\psi},\ \tau \in [0,T].
\end{equation*}
In view of Lemma \ref{adjlm}, $ \psi(\tau) = e_{\alpha}^{A^\ast(T-\tau)}\hat{\psi}$ is a solution to \eqref{adj1}.

According to Corollary \ref{cr1}, $m\in Z(t,z^0)$ if and only if $\lambda(t,z^0)\ge 0 $, hence $ T=T(z^0)=\min \{ t\ge 0: \ \lambda(t,z^0)\ge 0\} $.
Since $ \lambda(T,z^0)\ge 0 $, in virtue of \eqref{eq1}, $\hat{\psi}$ delivers minimum to the expression $ \sigma_{Z(T,z^0)}(\psi)-\psi\cdot m $.

Thus \begin{equation*}
\hat{u}(\tau)=\arg\max\limits_{u\in U} u\cdot \psi(\tau),
\end{equation*}
where $ \psi(\tau)$ is a solution to \eqref{adj1} under the condition \eqref{adj2}, which completes the proof.
\end{proof}

\section{Example}

Let us illustrate the above results by a simple example.

Consider a system with fractional dynamics described by the equation
\begin{equation} \label{ex1}
D^\alpha z=u,\ z\in \mathbb{R}^n,\ \|u\|\le 1,\ 0<\alpha<1,
\end{equation}
under the initial condition \eqref{fdeini}. In this example, the matrix $A$ and $U$ is the unit ball centered at the origin.  

Hence
\begin{equation*}
e_\alpha^{At}=e_\alpha^{A^\ast t}=\frac{t^{\alpha-1}}{\Gamma(\alpha)}I
\end{equation*}
and the support function of the attainability set has the form
\begin{gather*}
\sigma_{Z(t,z^0)}(\psi) = \psi \cdot e_{\alpha}^{At}z^0 + \int_{0}^{t} \sigma_U(e_{\alpha}^{A^\ast(t-\tau)}\psi)  d\tau\\
= \psi \cdot \frac{t^{\alpha-1}}{\Gamma(\alpha)}z^0 + \int_{0}^{t} \left\|\frac{(t-\tau)^{\alpha-1}}{\Gamma(\alpha)} \psi \right\|  d\tau\\
= \psi \cdot \frac{t^{\alpha-1}}{\Gamma(\alpha)}z^0 + \|\psi\| \frac{t^{\alpha}}{\Gamma(\alpha+1)}.
\end{gather*}
Suppose that $m=0$, then
\begin{equation} \label{ex2}
\lambda(t,z^0)=\min\limits_{\|\psi\|=1}[\sigma_{Z(t,z^0)}(\psi)]=\frac{t^{\alpha}}{\Gamma(\alpha+1)}-\frac{t^{\alpha-1}}{\Gamma(\alpha)}\|z^0\|.
\end{equation}
Thus the minimum time $T$, at which trajectory of the system \eqref{ex1} can reach the origin can be found as the smallest positive root of the equation
\[
\frac{t^{\alpha}}{\Gamma(\alpha+1)}=\frac{t^{\alpha-1}}{\Gamma(\alpha)}\|z^0\|.
\]
At $ t=0 $ the left-hand side of the latter equation is zero while its right-hand side is infinite, provided that $ \|z^0\|\ne 0 $. As $ t\to\infty $ the left-hand side increases without bound and the right-hand side approaches zero. Thus, the equation has a positive solution.

The system adjoint to \eqref{ex1}, \eqref{fdeini} has the form
\begin{align*} 
D_{T-}^\alpha\psi &= 0, \\
J_{T-}^{1-\alpha}\psi \bigr |_{t=T} &= -\frac{z^0}{\|z^0\|}
\end{align*}
and its solution is
\[
\psi(t)=-\frac{(T-t)^{\alpha-1}}{\Gamma(\alpha)}\frac{z^0}{\|z^0\|}.
\]
Finally the optimal control that ensures bringing trajectory of \eqref{ex1}, \eqref{fdeini} to the origin at the minimal time $ T$ is
\[
u(t) \equiv -\frac{z^0}{\|z^0\|}.
\]

\bibliographystyle{unsrt}

\end{document}